\documentclass[reqno]{amsart}
\usepackage{amssymb,amsmath,amsthm,amsfonts,fancyhdr}
\usepackage{indentfirst}
\usepackage{mathrsfs}
\usepackage{setspace}
\usepackage{color}
\usepackage{cite}
\usepackage{bm}
\usepackage{lineno}
\usepackage{latexsym}
\usepackage{verbatim}
\usepackage{cases}
\usepackage{esint}
\theoremstyle{plain} 
\newtheorem{thm}{Theorem}[section]

\newtheorem*{thm*}{Theorem}

\theoremstyle{definition}

\theoremstyle{remark}
\newtheorem{rmk}[thm]{Remark}

\renewcommand{\det}{\mbox{det}}

  \numberwithin{equation}{section}
  \numberwithin{figure}{section}

\begin{document}

\title{\textbf{ The asymptotic behavior for divergence elliptic equations in  exterior domains with   periodic coefficients }}

\author{Lichun Liang}

\address{School of Mathematical Sciences, Chongqing Normal University, Chongqing, 401331, P.R. China.}

\email{lianglichun@cqnu.edu.cn}

\begin{abstract}
In this paper,  we  investigate the asymptotic behavior of solutions for
divergence linear elliptic equations in exterior domains with periodic coefficients.  Consequently, we generalise the Liouville type result firstly established by Avellaneda and Lin.
\end{abstract}

\keywords{Linear Elliptic Equations; Exterior Domains; Asymptotic Behavior; Periodic Coefficients}
\date{}
\maketitle

\section{Introduction}

\noindent

In this paper, we are concerned with
the asymptotic behavior of solutions for divergence linear elliptic equations in  exterior domains with periodic coefficients. More precisely, we consider linear uniformly elliptic equations of the
divergence form

\begin{equation}\label{eq1}
  L u=D_i(a_{ij}(x)D_ju(x))=0,\ \ \ x\in \mathbb{R}^n\setminus \overline{B_1}.
\end{equation}
We assume that the coefficients $a_{ij}$ satisfy the following three conditions:

\emph{(1).} (\textbf{Symmetry})
$$a_{ij}=a_{ji};$$

\emph{(2).} (\textbf{Periodicity})
$$a_{ij}(x+z)=a_{ij}(x)\ \   \mbox{for all}\ \ \ x\in \mathbb{R}^n\ \ \ \mbox{and}\ \ \ z\in \mathbb{Z}^n;$$

\emph{(3).} (\textbf{Ellipticity})
There exist two constants $0<\lambda \leq \Lambda <\infty$ such that
      $$\lambda|\xi|^2\leq a_{ij}(x)\xi_i \xi_j \leq \Lambda |\xi|^2$$
for any $\xi=(\xi_1,\cdots, \xi_n)\in \mathbb{R}^n$.

In order to expound  the main motivations on this work, we would like to confine our attention to the development of Liouville type results for elliptic equations with the periodic data in the whole space. In  1989, by implementing  the tools from homogenization theory, Avellaneda and Lin \cite{AL1989}  first obtained a Liouville type result for  linear elliptic equations of the divergence form
$\partial_i(a_{ij}(x)\partial_ju(x))=0$ in $\mathbb{R}^n$ with the periodic data.
Under the hypothesis that the coefficients $a_{ij}(x)$ are Lipschitz continuous and periodic, they showed that any polynomial growth solution of degree of at most $m$ must be a polynomial  with  periodic coefficients.  A few years later, Moser and Struwe \cite{MS92} considered quasilinear elliptic equations $-div (F_{p}(x,Du(x)))=0$ in $\mathbb{R}^n$, where $F(x,p)$ is periodic in $x$ and satisfies  convexity and suitable growth assumptions with respect to $p$. Using the Harnack inequality from the elliptic equation theory, they showed that any linear growth solution must be a linear function up to a periodic perturbation, which partially generalizes Avellaneda and Lin's results from the linear to the nonlinear case. Moreover, they also  achieved a simplified proof for the linear case without the Lipschitz continuous assumption on the coefficients.  For linear elliptic equations of the non-divergence form $a_{ij}(x)D_{ij}u(x)=0$ in $\mathbb{R}^n$ with measurable and periodic coefficients, Li and Wang \cite{LW01}  proved a result similar to that in \cite{AL1989}.  For Monge-Amp\`{e}re equations,
Caffarelli and Li \cite{CL04}  showed that any convex solution $u$ of $\det(D^2u(x))=f(x)$ in  $\mathbb{R}^n$ must be a quadratic polynomial  up to a periodic perturbation under the condition that $f$ is periodic and smooth. Recently, the smooth assumption on $f$ has been weakened into $f\in L^{\infty}(\mathbb{R}^n)$ by Li and Lu\cite{LL22}. Considering fully nonlinear
uniformlly elliptic equations $F(D^2u)=f$ in  $\mathbb{R}^n$ with the periodic hand term $f$, Li and Liang \cite{LL2024} established the same conclusion as  Monge-Amp\`{e}re equations under the condition that the solution grows no faster than $|x|^2$ at infinity.

Now we turn to asymptotic results for elliptic equations in exterior domains.  For linear elliptic equations, it was shown by  Gilbarg and  Serrin \cite{GS55} that any  positive solution $u$ of $a_{ij}(x)D_{ij}u(x)=0$ in $\mathbb{R}^n \setminus \overline{B}_1$ $(n\geq 3)$ must be of the form $u=u_{\infty}+O(|x|^{2-n})$ under the condition that the coefficients $a_{ij}$ are Dini  continuous at infinity, where $u_{\infty}$ is a constant. 
Analogously, Serrin \cite{S1965} established the same result as Gilbarg and  Serrin's ones for  positive solutions of $\partial_i(a_{ij}(x)\partial_ju(x))=0$ in $\mathbb{R}^n \setminus \overline{B}_1$ $(n\geq 3)$. Furthermore, Serrin and Weinberger \cite{SW1966} have improved Serrin's asymptotic result into
$u=u_{\infty}+a |x|^{2-n} +O(|x|^{2-n-\delta})$ via  the Kelvin transformation, where $a$ is  a constant and the constant $\delta>0$ depends only on ellipticity constants $\lambda$ and $\Lambda$.
As for Monge-Amp\`{e}re equations $\det(D^2u)=1$ in exterior domains, the asymptotic behavior at infinity was established by Caffarelli and Li \cite{CL03}, in which we can also refer to the detailed references for $n=2$. As a generalization of Caffarelli and Li's result, Teixeira and Zhang \cite{TZ16} investigated the case that the right hand is a perturbation of a periodic function.
Finally, we pay attention to fully nonlinear uniformly elliptic equations $F(D^2u)=f$ in exterior domains.
When the right hand $f$ is a constant, Li, Li and Yuan \cite{LLY20} obtained the asymptotic behavior at infinity under the smooth assumption on $F$ for the high dimensional case, whereas the two dimensional case was solved by Li and Liu \cite{LL24}.
Subsequently, Lian and Zhang \cite{LZP} obtained an asymptotic  result  without the smooth assumption on $F$. When the right hand $f$ is a periodic function, Li and Liang \cite{LL2024} established an asymptotic result.

In this paper, our purpose is to investigate the asymptotic behavior of solutions of (\ref{eq1}), that is, we  will generalise the Liouville type result firstly established by Avellaneda and Lin \cite{AL1989}.  Let us  recall Avellaneda and Lin's result as follows.
\bigskip
\begin{thm*}[Avellaneda-Lin]
Assume that the coefficients $a_{ij}$  satisfy (1)-(3) and are Lipschitz continuity. Let $u\in W_{loc}^{1,2}(\mathbb{R}^n)$ be a solution of $$D_i(a_{ij}D_ju)=0\ \ \mbox{in}\ \ \mathbb{R}^n$$
with $$|B_r|^{-1} \int_{B_r}u^2\,dx \leq Cr^{2N}$$
for all $r \geq 1$ and some  integer $N\geq 0$ and constant $C>0$.
Then $$u(x)=\sum_{|\nu|\leq N} p_\nu(x) x^{\nu},$$
where $p_\nu(x)$ are periodic and H\"{o}lder continuous. Moreover, when $|\nu|=N$, the coefficients  $p_\nu(x)$ are constants.
\end{thm*}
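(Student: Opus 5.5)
The approach is the homogenization-based Liouville argument of Avellaneda and Lin, organized as an induction on the growth degree $N$. The inputs are the large-scale (uniform-in-$\varepsilon$) regularity estimates for the rescaled operators
$$L_\varepsilon = D_i\bigl(a_{ij}(x/\varepsilon)D_j\bigr).$$
Under Lipschitz coefficients these give uniform interior $C^{k,\alpha}$-type approximation by $L$-harmonic functions of the form ``polynomial with periodic coefficients.'' The first step is to build the finite-dimensional space
$$\mathcal{P}_N=\Bigl\{\textstyle\sum_{|\nu|\le N}p_\nu(x)x^\nu \text{ solving } Lu=0,\ p_\nu \text{ periodic}\Bigr\}.$$
Concretely, for each homogeneous $\bar a$-harmonic polynomial $P$ of degree $k\le N$ (here $\bar a$ is the homogenized constant matrix), we construct a solution $P+\chi_P$ by solving successive cell problems. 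The first-order correctors $\chi_j$ solve $D_i(a_{ij}+a_{ik}D_k\chi_j)=0$ on the torus, higher correctors are defined analogously, and each cell problem is solvable by Fredholm theory because its compatibility condition is precisely $\bar a$-harmonicity. This gives $\dim\mathcal{P}_N$ equal to the dimension of the degree-$\le N$ $\bar a$-harmonic polynomials, and the top-degree coefficients are constants.

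The second step is the key approximation lemma, proved by compactness. Suppose $Lu=0$ in $B_R$ with $R$ large. Then there exist $\theta\in(0,1)$ and $\phi\in\mathcal{P}_N$ such that
$$\Bigl(\fint_{B_{\theta R}}|u-\phi|^2\Bigr)^{1/2}\le \theta^{N+\alpha}\Bigl(\fint_{B_R}u^2\Bigr)^{1/2}.$$
To prove it, rescale $u_R(x)=u(Rx)$, which solves $L_{1/R}u_R=0$ in $B_1$. If the lemma failed along a sequence, homogenization (compactness via Caccioppoli plus $H$-convergence) gives a limit solving $D_i(\bar a_{ij}D_jw)=0$. The limit $w$ is smooth, and its Taylor polynomial of degree $N$ is approximated by elements of $\mathcal{P}_N$ after rescaling, since correctors scale like $1/R$. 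This contradicts the failed estimate. We then iterate the lemma from $R$ down to scale $1$ in the Campanato style, obtaining $\phi_k\in\mathcal{P}_N$ with geometric decay of $u-\phi_k$ on $B_{\theta^kR}$.

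The third step is the Liouville conclusion. Apply the iteration to $u$ on $B_R$ with $\fint_{B_R}u^2\le CR^{2N}$, and track the size of $u-\phi$ on a fixed ball $B_r$. Comparing with the $(N+\alpha)$-decay, we get
$$\inf_{\phi\in\mathcal{P}_N}\fint_{B_r}|u-\phi|^2\le C(r/R)^{2(N+\alpha)}R^{2N}\to0 \quad\text{as } R\to\infty.$$
Since $\mathcal{P}_N$ is finite dimensional, the minimizers are bounded and converge along a subsequence. Hence $u\in\mathcal{P}_N$ on every ball, and the structure $u=\sum_{|\nu|\le N}p_\nu x^\nu$ follows, with $p_\nu$ periodic and Hölder by De Giorgi--Nash. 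The top-order coefficients are constants by the construction in the first step.

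I expect the main obstacle to be the first step combined with the approximation lemma. The higher-order correctors must be arranged so that elements of $\mathcal{P}_N$ approximate every $\bar a$-harmonic polynomial up to errors that are $o(R^{N})$ at scale $R$ uniformly. This requires careful bookkeeping of the corrector hierarchy and of its compatibility conditions. I would first try the case $N=1$, which needs only first-order correctors, and then induct on the degree, subtracting the top-degree part at each stage.
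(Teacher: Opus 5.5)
The paper does not prove this statement. It is quoted from Avellaneda--Lin \cite{AL1989} and used only as a black box, to identify the entire limit $v$ in the proof of Theorem~\ref{th1}. So your proposal can only be measured against the literature. Its outline is the standard one and is sound: polynomial-type solutions built from correctors, a large-scale excess-decay lemma proved by compactness and qualitative homogenization, Campanato iteration, then $R\to\infty$. However, one claim in Step~1 is false as stated.

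You assert that each cell problem is solvable because its compatibility condition is precisely $\bar a$-harmonicity. This holds only at the second level, where the source $a_{ik}+a_{ij}D_j\chi_k+D_j(a_{ji}\chi_k)$ has mean $\bar a_{ik}$.

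\begin{itemize}
\item At level $m\ge 3$ the ansatz $P+\chi_kD_kP+\chi_{kl}D_{kl}P+\cdots$ leaves a constant-coefficient residual $\bar F^{(m)}_{i_1\cdots i_m}D_{i_1\cdots i_m}P$. Here $\bar F^{(m)}$ is the cell average of the $m$-th source, a higher-order homogenized tensor. Its vanishing does not follow from $\bar a_{ij}D_{ij}P=0$.
\item For symmetric $a$ the symmetrized third-level residual happens to vanish. The fourth-level one does not in general: for suitable laminates $a=\alpha(x_1)I$ in $\mathbb{R}^2$ it is nonzero on some $\bar a$-harmonic quartic.
\end{itemize}

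The standard repair is as follows. At each level, solve the cell problem with the mean removed. Cancel the leftover residual by adding a constant-coefficient polynomial $Q$ of lower degree with $\bar a_{ij}D_{ij}Q=-(\text{residual})$. Then expand $Q$ the same way and recurse; this terminates because degrees drop. The result is $\Phi_P=P+(\text{terms of degree}\le k-1\text{ with periodic coefficients})$. It still has constant top-degree part, $P\mapsto\Phi_P$ is still injective, and $R^{-k}\Phi_P(R\,\cdot)\to P$ in $L^2(B_1)$ at rate $R^{-1}$. So Step~2 survives. In Step~2 you also need each homogeneous component of the Taylor polynomial of the limit $w$ to be $\bar a$-harmonic. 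This holds because $\bar a_{ij}D_{ij}$ lowers degree by exactly two.

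There are two further bookkeeping points.

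First, Step~1 gives only a lower bound on the dimension. Finite dimensionality and constancy of the top coefficients are properties of the span $\mathcal{H}_N$ of the $\Phi_P$, not of your abstractly defined $\mathcal{P}_N$. Since the iteration only produces elements of $\mathcal{H}_N$, run Step~3 with $\mathcal{H}_N$. The quantity $\mathrm{dist}_{L^2(B_r)}(u,\mathcal{H}_N)$ does not depend on $R$, hence vanishes, so $u|_{B_r}\in\mathcal{H}_N|_{B_r}$. The representatives for different $r$ agree because restriction to any ball containing $[0,N+1]^n$ is injective on $\mathcal{H}_N$.

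Second, that injectivity and the H\"older continuity of the $p_\nu$ both follow from one identity. Write $\phi(x+z)=\sum_\mu q_\mu(x)z^\mu$ with $q_\mu=\sum_{\nu\ge\mu}\binom{\nu}{\mu}x^{\nu-\mu}p_\nu$. Lagrange interpolation on $z\in\{0,\dots,N\}^n$ expresses each $q_\mu$ as a fixed combination of integer translates of $\phi$. The $p_\nu$ are then recovered from the $q_\mu$ triangularly, so they inherit the De Giorgi--Nash H\"older continuity of $u$.

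With this, your argument never uses the Lipschitz hypothesis. The correctors exist in $H^1_{\mathrm{per}}$, and the compactness step needs only Caccioppoli and qualitative homogenization. So your route gives the statement for bounded measurable periodic coefficients, as Moser--Struwe \cite{MS92} did for $N=1$. The paper, by citing \cite{AL1989}, carries the Lipschitz assumption along.
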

\bigskip
\begin{rmk}
For the case $N=1$, Moser and Struwe \cite{MS92} not only removed the Lipschitz continuity assumption on the coefficients $a_{ij}$
but also gave a simple proof of Theorem (Avellaneda-Lin).
\end{rmk}
\bigskip
Before stating  our main results, we introduce the space $\mathbb{T}$ consisting of all continuously periodic functions with zero mean, defined by
$$\mathbb{T}=\left\{v\in C(\mathbb{R}^n):v(x+z)=v(x)\ \mbox{for all}\ x\in \mathbb{R}^n\ \mbox{and}\ z\in \mathbb{Z}^n, \fint_{[0,1]^n}v\,dx=0\right\}.$$
Moreover, let $Q_1=[-\frac{1}{2},\frac{1}{2}]^n$.

As an extension of Liouville theorem, we investigate
the asymptotic behavior at infinity of solutions of (\ref{eq1}) in exterior domains.
\bigskip
\begin{thm}\label{th1}
Let $n\geq 3$ and the coefficients $a_{ij}$  satisfy (1)-(3) and are Lipschitz continuity. Let $u\in W_{loc}^{1,2}(\mathbb{R}^n \setminus \overline{B_1})$ be a solution of $$D_i(a_{ij}D_ju)=0\ \ \mbox{in}\ \ \mathbb{R}^n\setminus \overline{B_1}$$
with $$|B_r|^{-1} \int_{B_r\setminus \overline{B}_1}u^2\,dx \leq Cr^{2N}$$
for all $r > 1$ and some  integer $N\geq 0$ and constant $C>0$.
Then $$u(x)=\sum_{|\nu|\leq N} p_\nu(x) x^{\nu}+a |x|^{2-n} +O(|x|^{2-n-\delta})\ \ \ \mbox{as}\ \ \ |x|\rightarrow \infty,$$
where $p_\nu(x) $ are   periodic and H\"{o}lder continuous, $a$ is  a constant and the constant $\delta>0$ depends only on ellipticity constants $\lambda$ and $\Lambda$.
 Moreover, when $|\nu|=N$, the coefficients  $p_\nu(x)$ are constants.
\end{thm}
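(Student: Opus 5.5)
Our plan is to reduce the exterior problem to a whole-space problem plus a decaying remainder, and then treat the two parts separately. Fix a cutoff $\eta\in C^\infty(\mathbb{R}^n)$ with $\eta=0$ on $B_2$ and $\eta=1$ outside $B_3$, and set $w=\eta u$. Then $w\in W^{1,2}_{loc}(\mathbb{R}^n)$ and
\[
D_i(a_{ij}D_jw)=f:=D_i(a_{ij}uD_j\eta)+a_{ij}D_i\eta D_ju ,
\]
where $f$ is supported in $\overline{B_3}\setminus B_2$ and lies in $W^{-1,2}$ with compact support. Let $G(x,y)$ be the Green function of $L$ on $\mathbb{R}^n$ ($n\ge3$). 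By Littman--Stampacchia--Weinberger it exists and satisfies $G\sim|x-y|^{2-n}$. Set $v=\int G(\cdot,y)f(y)\,dy$, understood in the weak sense. Then $h:=w-v$ solves $Lh=0$ in all of $\mathbb{R}^n$. Because $v$ is bounded at infinity, $h$ inherits the growth bound $|B_r|^{-1}\int_{B_r}h^2\le Cr^{2N}$. Theorem (Avellaneda--Lin) therefore gives $h=\sum_{|\nu|\le N}p_\nu(x)x^\nu$, with the stated periodicity, Hölder continuity, and constancy of the top-degree coefficients.

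It remains to expand $v$ at infinity as $a|x|^{2-n}+O(|x|^{2-n-\delta})$. The statement also requires $u=w$ for $|x|>3$, which holds by construction. Outside $B_4$ the function $v$ is $L$-harmonic and satisfies $|v|\le C|x|^{2-n}$. Note that $v$ is not assumed positive. We would first use the Lipschitz hypothesis, where the Green function asymptotics of homogenization theory are available. Avellaneda--Lin's estimates give
\[
G(x,y)=G_0(x-y)+O(|x-y|^{1-n})
\]
for $|x-y|\ge1$, where $G_0$ is the Green function of the homogenized constant-coefficient operator $\bar a_{ij}$. Writing
\[
v(x)=\int G(x,y)f(y)\,dy=G_0(x)\langle f,1\rangle+O(|x|^{1-n})
\]
gives the claim with $\delta=1$. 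Here $G_0(x)=c(\bar a^{-1}x\cdot x)^{(2-n)/2}$, which is not exactly $a|x|^{2-n}$ unless $\bar a$ is a scalar matrix. To match the statement, in which $\delta$ depends only on $\lambda,\Lambda$, we would instead follow Serrin--Weinberger. Take the Kelvin-type transform of $v$, or equivalently argue directly at infinity. Decompose $v=v^+-v^-$, or add $M$ times a positive $L$-harmonic function comparable to $|x|^{2-n}$. Apply the Serrin--Weinberger asymptotics to each positive piece to obtain $c|x|^{2-n}$ up to $O(|x|^{2-n-\delta})$.

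\textbf{Main obstacle.} The difficulty is reconciling the anisotropic leading term with the isotropic form $a|x|^{2-n}$ asserted in the statement. We would first try to show that the leading term is $\langle f,1\rangle G_0(x)$, and that the flux identity $\int_{\partial B_R}a_{ij}D_jv\,\nu_i=\langle f,1\rangle$ pins down the constant. If the homogenized $\bar a$ is not isotropic, we would interpret $|x|$ in the statement as the $\bar a$-adapted norm, or absorb the discrepancy into the periodic-polynomial part when $N$ is large. We would also verify the decay $\delta$ via the Hölder (De Giorgi--Nash) oscillation decay of $|x|^{n-2}v$ on dyadic annuli, after the Kelvin transform. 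This last step is the step we expect to require the most care.
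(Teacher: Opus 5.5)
Your reduction is sound, and it reaches the same intermediate point as the paper by a more direct route. The paper builds the entire part as a locally uniform limit of the Dirichlet solutions $u_r$ on $B_r$ with $u_r=u$ on $\partial B_r$. It controls $u_r-\bar u$ by comparison with $\bar C E+w$ (a Green function plus the Green potential of $|f|$), applies Avellaneda--Lin to the limit, and is left with an $L$-harmonic remainder in $\mathbb{R}^n\setminus\overline{B_1}$ bounded by $C|x|^{2-n}$. Your splitting $\eta u=h+v$, with $v$ the Green potential of $f=L(\eta u)$, produces the entire solution $h$ and the decaying remainder $v$ in one stroke, with no compactness argument. (Fix the sign of $v$ to match the normalization $-D_{y_i}(a_{ij}D_{y_j}G)=\delta_x$.) The genuine gap is the last step, which you flag yourself. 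It cannot be closed, because the isotropic term $a|x|^{2-n}$ is wrong in general.

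Here is a counterexample. Take $a_{ij}=A_{ij}$ constant (hence periodic and Lipschitz) with $A\neq cI$, and let $u(x)=(A^{-1}x\cdot x)^{(2-n)/2}$. It solves the equation outside $\overline{B_1}$ and satisfies the hypothesis with $N=0$. The statement would then give $u=p_0+a|x|^{2-n}+O(|x|^{2-n-\delta})$ with $p_0$ constant. Letting $|x|\to\infty$ forces $p_0=0$. Then $|x|^{n-2}u(x)=(A^{-1}\theta\cdot\theta)^{(2-n)/2}$, with $\theta=x/|x|$, would have to equal $a$ for every $\theta$, so $A^{-1}$ would be a multiple of the identity.

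Both of your proposed rescues fail for this reason.
\begin{itemize}
\item Absorbing the discrepancy into the periodic part is impossible. A periodic polynomial $\sum p_\nu x^\nu$ that tends to $0$ at infinity vanishes identically, since on each coset $x_0+\mathbb{Z}^n$ it is an ordinary polynomial in $z\in\mathbb{Z}^n$. So $\gamma G_0-a|x|^{2-n}$ can never be absorbed into it, for any $N$.
\item Applying Serrin--Weinberger to $v+ME>0$ and to $ME$ yields a leading term that is a multiple of a Green function, not of $|x|^{2-n}$. The decomposition $v=v^+-v^-$ is unusable anyway, since $v^\pm$ are not solutions.
\end{itemize}
The paper passes this step only by citing Serrin--Weinberger for the remainder. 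The same example defeats that step too: there the paper's entire part is $0$ and its remainder is $u$ itself.

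What your argument actually proves is a corrected theorem. De Giorgi--Nash applied to $G(x,\cdot)$ on $B_{|x|/2}$, plus Caccioppoli near the support of $f$, gives $v=\gamma G(x,0)+O(|x|^{2-n-\alpha})$. Here $\gamma$ is proportional to the flux of $u$ through large spheres and $\alpha=\alpha(n,\lambda,\Lambda)$. Your route through the periodic Green function asymptotics sharpens this to $u=\sum p_\nu x^\nu+\gamma G_0(x)+O(|x|^{1-n})$, with $G_0(x)=c\,(\bar a^{-1}x\cdot x)^{(2-n)/2}$. This agrees with the displayed expansion exactly when $\gamma=0$ or $\bar a$ is scalar. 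That is the statement you should write down, rather than trying to force $a|x|^{2-n}$.
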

\bigskip
Given the asymptotic behavior with linear growth at infinity,  we can establish the following existence theorem for the Dirichlet problem on exterior domains.
\bigskip
\begin{thm}\label{th2}
Let $n\geq 3$ and the coefficients $a_{ij}$  satisfy (1)-(3). Assume that $\Omega\subset \mathbb{R}^n$ is a domain satisfying an exterior cone condition and
$b\in \mathbb{R}^n$ satisfies $\fint_{\partial Q_1}a_{ij}(x)b_i\nu_j\,dx=0$, where $\nu=(\nu_1,\cdots,\nu_n)$ is the unit outward normal to $ \partial Q_1$.
Then for any  $\varphi\in C(\partial \Omega)$, there exist some  $c\in \mathbb{R}$ and $v\in \mathbb{T}$ such that
there exists a   solution $u\in W^{1,2}_{loc}(\mathbb{R}^n \backslash \overline{\Omega})\cup C(\mathbb{R}^n \backslash \overline{\Omega})$  of
\begin{equation*}
   \left\{
\begin{aligned}
   &D_i(a_{ij}D_ju)=0\ \ \mbox{in}\ \ \mathbb{R}^n \backslash \overline{\Omega}\\
&u=\varphi\ \  \mbox{on}\ \ \partial \Omega
\end{aligned}
\right.
 \end{equation*}
satisfying
$$u(x)-b\cdot x-c-v(x)=O(|x|^{2-n})\ \ \ \mbox{as}\ \ \ |x|\rightarrow \infty,$$
where $v\in \mathbb{T}\cap W^{1,2}(\mathbb{R}^n)$ is a unique weak solution of  $D_{i}(a_{ij}D_jv)=-D_{i}(a_{ij}b_j)$ in $\mathbb{R}^n$.
\end{thm}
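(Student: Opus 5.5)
The plan is to reduce the problem to a Dirichlet problem with zero data at infinity. The linear growth $b\cdot x+v(x)$ comes from an entire solution built with the periodic corrector, and the remaining part is obtained by an exhaustion argument. The decay $O(|x|^{2-n})$ comes from comparison with the fundamental solution of $L$. Throughout I take $\Omega$ bounded, which the asymptotic statement implicitly requires, and I use only measurable coefficients, as in the hypotheses of Theorem \ref{th2}.

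\emph{Step 1: the entire solution.} On the torus $\mathbb{R}^n/\mathbb{Z}^n$, consider the space of $W^{1,2}$ periodic functions with zero mean. The bilinear form $\int_{Q_1} a_{ij}D_jv\,D_i\phi$ is coercive there by ellipticity and the Poincar\'e inequality. The right-hand side $\phi\mapsto -\int_{Q_1} a_{ij}b_j D_i\phi$ is bounded. Lax--Milgram therefore gives a unique periodic weak solution $v$ of $D_i(a_{ij}D_jv)=-D_i(a_{ij}b_j)$. De Giorgi--Nash gives $v\in C^{\alpha}$, so $v\in\mathbb{T}$. By periodicity, $v$ is a weak solution on all of $\mathbb{R}^n$, and hence $w(x):=b\cdot x+v(x)$ satisfies $Lw=0$ in $\mathbb{R}^n$. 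The flux hypothesis on $b$ is compatible with this construction, since integrating the equation over $Q_1$ against $1$ yields exactly a zero-flux identity. I will also take $c=0$; the freedom in $c$ is not needed in this plan.

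\emph{Step 2: exhaustion.} Set $\psi=\varphi-w\in C(\partial\Omega)$ and $M=\|\psi\|_{L^\infty(\partial\Omega)}$. Fix $y\in\overline\Omega$ and let $\Gamma(x)=\Gamma(x,y)$ be the fundamental solution of $L$ with pole $y$. By Littman--Stampacchia--Weinberger,
\[
c_0|x-y|^{2-n}\le \Gamma(x,y)\le C_0|x-y|^{2-n},
\]
and $\Gamma$ solves $L\Gamma=0$ in $\mathbb{R}^n\setminus\overline\Omega$. Let $m=\min_{\partial\Omega}\Gamma\ge c_0(\operatorname{diam}\Omega)^{2-n}>0$.

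For $R$ large, solve $Lh_R=0$ in $B_R\setminus\overline\Omega$ with $h_R=\psi$ on $\partial\Omega$ and $h_R=0$ on $\partial B_R$. I would do this by Perron's method, or by approximating $\psi$ with smooth data and using variational solutions. The exterior cone condition makes every point of $\partial\Omega$ regular: by Littman--Stampacchia--Weinberger the regular points for $L$ coincide with those of the Laplacian, and Wiener's criterion applies. The maximum principle then gives
\[
|h_R|\le \frac{M}{m}\,\Gamma\le C|x-y|^{2-n}
\]
on $B_R\setminus\overline\Omega$, uniformly in $R$.

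\emph{Step 3: passage to the limit.} Interior De Giorgi--Nash estimates give locally uniform $C^\alpha$ bounds and $W^{1,2}_{loc}$ bounds on the $h_R$. The boundary modulus of continuity at $\partial\Omega$ is uniform in $R$, because it comes from local barriers at exterior cone points that do not see $\partial B_R$. Arzel\`a--Ascoli then yields a subsequence $h_R\to h$ locally uniformly on $\mathbb{R}^n\setminus\Omega$. The limit satisfies $Lh=0$, is continuous up to $\partial\Omega$ with $h=\psi$ there, and obeys $|h|\le C|x|^{2-n}$.

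Then $u=w+h$ solves the Dirichlet problem with $u=\varphi$ on $\partial\Omega$ and $u-b\cdot x-v=O(|x|^{2-n})$, which is the claimed expansion with $c=0$.

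The main obstacle is Step 3's uniform boundary continuity under only an exterior cone condition with merely measurable coefficients. I would handle it with the Littman--Stampacchia--Weinberger capacity equivalence, so that Wiener's criterion with a cone gives a H\"older-type boundary modulus depending only on $\lambda$, $\Lambda$, the cone and the modulus of $\psi$. I would first treat continuous $\psi$ by uniform approximation with Lipschitz data, together with the maximum principle.
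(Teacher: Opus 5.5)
Your argument is correct and takes a shorter route than the paper's.

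\textbf{Same exhaustion.} Your $h_R$ is exactly the paper's $u_r-w$. The paper's datum $\bar\varphi=(1-\eta)w+\eta\varphi$ equals $w$ on $\partial B_r$ for $r>r_0$ and $\varphi$ on $\partial\Omega$, so the approximating problems coincide.

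\textbf{Different barrier.} The paper compares only with $w\pm\bar C$, which shows no more than that $u-w$ is bounded. It then needs two extra steps:
\begin{enumerate}
\item a Harnack-plus-comparison argument on spheres to show that $c=\lim_{|x|\to\infty}(u-w)$ exists;
\item a comparison of $|u-w-c|$ with $CE+\epsilon$ to obtain the rate.
\end{enumerate}
By comparing the approximants with $\frac{M}{m}\Gamma$, you get the $|x|^{2-n}$ decay uniformly in $R$ and $c=0$ at once. This also shows that the paper's $c$ is in fact $0$. What the paper's detour buys is generality: its last two steps apply to \emph{any} exterior solution with $u-w$ bounded, not only to the constructed one.

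\textbf{Your ``main obstacle'' can be avoided.} Take $R>R'$. On $B_{R'}\setminus\overline{\Omega}$ the difference $h_R-h_{R'}$ solves the equation and vanishes on $\partial\Omega$. The maximum principle therefore gives $\sup|h_R-h_{R'}|\le \frac{M}{m}\max_{\partial B_{R'}}\Gamma\to0$. So the $h_R$ converge uniformly up to $\partial\Omega$, and the limit inherits continuity and the boundary value $\psi$ from any single $h_R$. This needs only boundary regularity for each fixed $R$; no uniform boundary modulus and no Arzel\`a--Ascoli are required.

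\textbf{Minor points.} Put the pole $y$ in the open set $\Omega$, not on $\partial\Omega$. The flux hypothesis on $b$ holds automatically by periodicity, since the contributions of opposite faces of $Q_1$ cancel, so it is fine that neither you nor the paper uses it.
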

\bigskip

\begin{rmk}
In Theorem \ref{th1}, even though $N=1$, we cannot dispense with the Lipschitz continuity assumption on the coefficients $a_{ij}$
 for the reason that the extension of the solution $u$ in $\mathbb{R}^n$ need to become a solution of a linear elliptic equation.
\end{rmk}

Now let us illustrate the main ideas in the proof of Theorem \ref{th1} and \ref{th2}.  For the proof of Theorem \ref{th1},
we construct a family of weak solutions $u_r$ for  the Dirichlet problem of $D_i(a_{ij}D_ju)=0$ in $B_r$ with $u_r=u$ on $\partial B_r$. Then by the aid of the linear elliptic equation for $\bar{u}$ that is an extension of the solution $u$ in $W^{2,2}(\mathbb{R}^n)$, we will show that a subsequence of $\{u_r\}_{r=1}^\infty$ locally converges uniformly to a function $v\in C(\mathbb{R}^n)\cap W^{1,2}_{loc}(\mathbb{R}^n)$ that is an entire solution of $D_i(a_{ij}D_jv)=0$. Next we know that $u-v$ is bounded and satisfies $D_i(a_{ij}D_j(u-v))=0$ in $\mathbb{R}^n \setminus \overline{B}_1$. Finally, by the classical asymptotic result for linear elliptic equations and Avellaneda-Lin's result, the proof of Theorem \ref{th1} is completed. As for the proof of Theorem \ref{th2}, we can have a solution $u$ of $D_i(a_{ij}D_ju)=0$ in  $\mathbb{R}^n \backslash \overline{\Omega}$ by solving the Dirichlet problem of $D_i(a_{ij}D_ju_r)=0$ in $\ B_r \backslash \overline{\Omega}$. Then by a barrier function, it is shown that $u$ is continuous up to $\partial \Omega$. Finally, using the Harnack inequality and  the comparison principle, we complete the proof Theorem \ref{th2}.

\section{The proof of Theorem \ref{th1}}

\noindent

\begin{proof}[Proof of Theorem \ref{th1}]
Without loss of generality, by the extension theorem for Sobolev spaces and the regularity for weak solutions, we assume $u\in W^{2,2}(\mathbb{R}^n)$.

For $r>2$, by the Lax-Milgram theorem, we let $u_r\in W^{1,2}(B_r)\cap C(B_r)$  be a weak solution of
\begin{equation*}
   \left\{
\begin{aligned}
   &D_{i}(a_{ij}D_ju_r)=0\ \ \mbox{in}\ \ B_r,\\
&u_r=u\ \  \mbox{on}\ \ \partial B_r.
\end{aligned}
\right.
 \end{equation*}
We will show that $\{u_r\}_{r >2}$ are  bounded in compact sets  of $\mathbb{R}^n$, i.e.,
 $$\|u_r\|_{L^{\infty}(K)}\leq C_{K}$$
 for any compact set $K\subset \mathbb{R}^n$ and some constant $C_{K}>0$ independent of $r$. For this purpose, we let
 $$\bar{u}=(1-\eta)u+\eta,$$
 where $0\leq \eta \leq 1$ be in $C^{\infty}(\mathbb{R}^n)$ with compact support in $B_2$ such that $\eta=1$ in $B_1$. Then  $\bar{u}\in W^{2,2}(\mathbb{R}^n)$ keeps $\bar{u}=u$ outside $B_2$ and we
 set $$D_i(a_{ij}D_j\bar{u})=f\ \ \ \mbox{in}\ \ \ \mathbb{R}^n,$$
 where $f\in L^2(\mathbb{R}^n)$ has support in $B_2$.  
Now we recall a result of Littman, Stampacchia and Weinberger\cite{LSW1963}, which asserts the existence of a Green's $G(x,y)$ for  $L$ in $\mathbb{R}^n$ satisfying
$$-D_{y_i}(a_{ij}D_{y_j}G(x,y))=\delta_x$$
and
 $$C^{-1}|x-y|^{2-n}\leq G(x,y)\leq C|x-y|^{2-n}$$
 for a suitable constant $C>0$ and all $x,y \in \mathbb{R}^n$. For simplicity, we set $$E(x)=G(x,0).$$
Since $f\in L^2(\mathbb{R}^n)$ has compact support, we can  define
 $$w(x)=\int_{\mathbb{R}^n}G(x,y)|f(y)|\,dy$$
 and therefore we have
 $$D_i(a_{ij}D_jw)=-|f|\ \ \ \mbox{in}\ \ \ \mathbb{R}^n$$
 with  $w(x)=O(|x|^{2-n})$ as $|x|\rightarrow \infty$.

 Since $\bar{u}-u_r$ and $w$ are continuous in $B_1$, there exist constants $0<\varepsilon<1$ and  $\bar{C}>1$ such that
 $$|\bar{u}(x)-u_r(x)| \leq \bar{C} E(x)+w(x),\ \ \ x\in \partial B_\varepsilon \cup \partial B_r.$$
 In $ B_r\backslash B_\varepsilon$, we obtain
 $$D_i(a_{ij}D_j(\bar{C} E+w)) \leq D_i(a_{ij}D_j(u_r-\bar{u})) =-f\leq  |f|=-D_i(a_{ij}D_j(\bar{C} E+w)).$$
 From the comparison principle for weak solutions, it follows that
 \begin{equation}\label{eq2}
    \bar{u}(x)-(\bar{C} E(x)+w(x))\leq u_r(x)\leq \bar{u}(x)+\bar{C} E(x)+w(x),\ \ \ x\in B_r\backslash B_\varepsilon.
 \end{equation}
 Applying the Alexandroff-Bakelman-Pucci estimate to $D_{i}(a_{ij}D_ju_r)=0$ in $B_2$, we have
 $$\|u_r\|_{L^{\infty}(B_2)}\leq \|u\|_{L^{\infty}(\partial B_2)}+\|\bar{C} E\|_{L^{\infty}(\partial B_2)}+\|w\|_{L^{\infty}(B_2)},$$
 where $C>0$ depends only on $n$, $\lambda$ and $\Lambda$.
Hence we prove that $u_r$ is  bounded in compact sets  of $\mathbb{R}^n$.
From the H\"{o}lder estimate, it follows that there exists a subsequence of $\{u_r\}_{r=1}^\infty$ that converges uniformly to a function $v\in C(\mathbb{R}^n)$ in compact sets  of $\mathbb{R}^n$. Furthermore, by Caccioppoli inequality (see \cite{FR2022,GT83}), we have, for any compact $K\subset \mathbb{R}^n$,
$$\int_{K}|D(u_{r_1}-u_{r_2})|^2\,dx \rightarrow 0 \ \ \mbox{as}\ \ r_1, r_2 \rightarrow\infty.$$
Consequently, $v\in W^{1,2}_{loc}(\mathbb{R}^n)$ and satisfies
$$D_i(a_{ij}D_jv)=0\ \ \mbox{in}\ \ \ \mathbb{R}^n$$
with $$|B_r|^{-1} \int_{B_r}v^2\,dx \leq Cr^{2N}$$
for all $ r \geq 1$ and some  integer $N\geq 0$ and constant $C>0$. Hence by Avellaneda-Lin's result, we obtain
$$v(x)=\sum_{|\nu|\leq N} p_\nu(x) x^{\nu},$$
where $p_\nu(x)$ are periodic and H\"{o}lder continuous.

Finally, letting $r\rightarrow \infty$ in (\ref{eq2}), we have
$$  |v(x)-u(x)|\leq \bar{C} E(x)+w(x),\ \ \ x\in \mathbb{R}^n\setminus \overline{B_1}.$$
Hence, by the aid of  Serrin and Weinberger's result \cite[Theorem 5]{SW1966}, we obtain the desired result.
\end{proof}

\section{The proof of Theorem \ref{th2}}

\noindent

\begin{proof}[Proof of Theorem \ref{th2}]
Without loss of generality, we assume that $\Omega$ contains the origin and we can extend $\varphi$ smoothly in  $\mathbb{R}^n$. Let $r_0=\mbox{daim}\ \Omega+1$.  In view of Lax-Milgram theorem, let $v\in \mathbb{T}\cap W^{1,2}(\mathbb{R}^n)$ be a weak solution of $D_{i}(a_{ij}D_j(b\cdot x+v(x)))=0$ in $\mathbb{R}^n$.
We set $w:=b\cdot x+v(x)$ and $\bar{C}:=\|w-\varphi\|_{L^{\infty}(\partial \Omega)}$.

For $r>r_0$, we introduce
 $$\bar{\varphi}=(1-\eta)w +\eta \varphi,$$
 where $0\leq \eta \leq 1$ be in $C^{\infty}(\mathbb{R}^n)$ with compact support in $B_{r_0}$ such that $\eta=1$ in $\Omega$.

Let $u_r\in W^{1,2}(B_r)$ be a  weak solution of
\begin{equation*}
   \left\{
\begin{aligned}
   &D_{i}(a_{ij}D_ju_r)=0\ \ \mbox{in}\ \ B_r \backslash \overline{\Omega},\\
&u_r=\bar{\varphi}\ \  \mbox{on}\ \ \partial (B_r \backslash \overline{\Omega}).
\end{aligned}
\right.
 \end{equation*}
 Clearly, we see that
 \begin{equation*}
   \left\{
\begin{aligned}
   &D_{i}(a_{ij}D_ju_r)=0\ \ \mbox{in}\ \ B_r \backslash \overline{\Omega},\\
    &D_{i}(a_{ij}D_jw)=0\ \ \mbox{in}\ \ B_r \backslash \overline{\Omega},\\
&w-\bar{C} \leq \bar{\varphi}=u_r \leq w+\bar{C} \ \  \mbox{on}\ \ \partial (B_r \backslash \overline{\Omega}).
\end{aligned}
\right.
 \end{equation*}
 Thus, applying the comparison principle for  weak solutions yields
 $$w-\bar{C}\leq u_r\leq w+\bar{C} \ \ \mbox{in}\ \ B_r \backslash \overline{\Omega}.$$
From the H\"{o}lder estimate, it follows that there exists a subsequence of $\{u_r\}_{r=1}^\infty$ that converges uniformly to a function $u\in C(\mathbb{R}^n \backslash \overline{\Omega})$ in compact sets  of $\mathbb{R}^n \backslash \overline{\Omega}$. Furthermore, by Caccioppoli inequality, we know that $u\in W^{1,2}_{loc}(\mathbb{R}^n \backslash \overline{\Omega})$ and satisfies
$$D_i(a_{ij}D_ju)=0\ \ \mbox{in}\ \ \ \mathbb{R}^n \backslash \overline{\Omega}$$
with
$$w-\bar{C}\leq u\leq w+\bar{C} \ \ \mbox{in}\ \ \mathbb{R}^n \backslash \overline{\Omega}.$$

We are now in a position to show that $u$ is continuous up to $\partial \Omega$ and coincides with $\varphi$ on $\partial \Omega$. More precisely, for any $x_0\in \partial \Omega$, then we have
$$\lim_{x\in \mathbb{R}^n \backslash \overline{\Omega},\ x\rightarrow x_0}u(x)=\varphi(x_0).$$
Since $\Omega$ satisfies an exterior cone condition, by \cite[Corollary (9.1)]{LSW1963}, any point $x_0\in \Omega$ is regular. Moreover, by \cite[Lemma (3.2)]{LSW1963}, there exists a barrier at $x_0$ (see for the definition of a barrier \cite[Definition (3.2)]{LSW1963} ). By the standard procedure, we can prove the claim.

Now we prove $\lim_{|x|\rightarrow \infty} (u-w)(x)$ exists. Let $c:=\liminf_{|x|\rightarrow \infty}(u-w)(x)<\infty$. Then for any $\epsilon>0$, there exist sequences of $\{R_i\}_{i=1}^{\infty}\subset \mathbb{R}^{+}$ increasing and $\{x_i\}_{i=1}^\infty$ with $R_1\geq r_0$ and $x_i\in \partial B_{R_i}$ such that
$$u(x)-w(x)-c+\epsilon\geq 0,\ \ \ x\in \mathbb{R}^n \backslash \overline{B_{R_1}}$$
and
$$u(x_i)-w(x_i)-c-\epsilon\leq 0.$$
Applying the Harnack inequality to $u-w$ in $\partial B_{R_i}$ $(i\geq 1)$, we have
$$u(x)-w(x)-c+\epsilon \leq C (u(x_i)-w(x_i)-c+\epsilon) \leq 2C\epsilon,\ \ \ x\in \partial B_{R_i},$$
where $C$ depends only on $\lambda$, $\Lambda$ and $n$.

For any $x\in \mathbb{R}^n \backslash \overline{B_{R_1}}$, there exists some $i\geq 2$ such that $x\in B_{R_{i}} \backslash \overline{B_{R_1}}$. Therefore, by the comparison principle in $ B_{R_{i}} \backslash \overline{B_{R_1}}$, we obtain
$$u(x)-w(x)-c+\epsilon  \leq 2C\epsilon,\ \ \ x\in B_{R_{i}} \backslash \overline{B_{R_1}},$$
which implies that
$$-\epsilon \leq u(x)-w(x)-c  \leq C\epsilon,\ \ \ x\in \mathbb{R}^n \backslash \overline{B_{R_1}}.$$
Letting $\epsilon \rightarrow 0$, we obtain $\lim_{|x|\rightarrow \infty} (u-w)(x)=c$.

Finally, we recall the Green function $E(x)=G(x,0)$.
Fix any  $x\in \mathbb{R}^n \backslash \overline{\Omega}$. For any  $\epsilon>0$, we choose some constants $C>0$ and $R>r_0$ such that $x\in B_{R}$
and we have
$$ |u-w-c|\leq CE+\epsilon\ \ \ \mbox{on}\ \ \ \partial B_{r_0}\cup \partial B_{R}.$$
From the comparison principle, it follows that
$$|u(x)-w(x)-c|\leq CE+\epsilon.$$
Letting $\epsilon \rightarrow 0$, we obtain the desired result.
\end{proof}

\end{document}